\newtheorem{thm}{Theorem}
\newtheorem{lem}[thm]{Lemma}
\newtheorem{prop}[thm]{Proposition}
\DeclareMathOperator{\rank}{rk}
\DeclareMathOperator{\Hom}{Hom}
\title{An addendum to `A rigidity property for the set of all characters induced by valuations'}
\author{J.~R.~J.~Groves}
\begin{document}
\maketitle
\section{Introduction}
The aim of this note is to correct and amplify some comments made in \cite{rig} on a possible generalisation to the results contained there. The comments are contained in the second paragraph of page 427. The paper is concerned with field automorphisms preserving finitely generated multiplicative subgroups of the field. The first sentence of the offending paragraph claims that the techniques can be extended to  subgroups which are torsion-free and of finite rank. This is largely correct and in the following section, I make precise the result to which this leads (Theorem \ref{thmb}). The second sentence claims that a major result of the paper holds in this wider context. This is false and  I will describe some results which might replace the false assertion (Theorem \ref{cor}).

 Segal in \cite[Proof of Theorem 2]{seg2} first observed  the incorrectness of our claim. This note depends heavily on his observation and on  some of the results he proved while considering related problems. Brookes, who made a similar error in \cite{cjbb}, has also proved a replacement to the incorrect claim. This is reported in \cite{seg2} in Section 3.1. Thus most of this note does not describe new results. In Section  \ref{extrathm}, however, we attempt to narrow down  further the new possibilities that were overlooked in the original paper (Theorem \ref{xthm}).
 
\section{Statement of results}\label{SR}
We introduce some notation. (In general we shall prefer to retain the notation and context of \cite{rig} rather than those of \cite{seg1}, \cite{seg2} or \cite{cjbb}.)  We denote by $K$  a field with subfield $k$. If $G$ is a torsion-free group of finite rank which is a subgroup of the multiplicative group of $K$ then we denote by $kG$ the group ring of $G$ over $k$ and by $k[G]$ the image of this group ring in $K$---alternatively, the $k$-subalgebra of $K$ generated by $G$. We shall suppose that any element of $G$ which is algebraic over $k$ is trivial. This assumption  is probably not necessary (it is not needed to prove the first theorem when $G$ is finitely generated) but the extra generality would take us too far from our main path.

A subgroup $H$ of a group $G$ is said to be {\it dense}  in $G$ if $G/H$ is torsion.

The {\it controller} $\mathcal C_G(P)$ of an ideal $P$ of a group ring $kG$ is the unique minimal subgroup $H$ such that  $P=(kH\cap P)kG$. We shall denote by $\mathcal B_G(P)$ the isolator of the controller of $P$ (that is, the subgroup of elements of $G$ which have some power lying in the controller of $P$). 

We now fix a subgroup $G$ of the multiplicative group of $K$ which has finite rank. Fix $I$ to be the kernel of the projection from $kG$ onto $k[G]$ and let $B=\mathcal B_G(I)$. If $G$ is finitely generated then $B$ will coincide with the subgroup with the same name in \cite{rig} (see the discussion in Section \ref{PfThmB}). In the more general case of finite rank, a natural extension of the definition in \cite{rig} yields the same result as given here. (See Proposition \ref{techprop}.)

If $\sigma$ is a $k$-automorphism of $k[G]$ which stabilises $G$ then the restriction of $\sigma$ to $G$ can be extended to an automorphism of $kG$ which stabilises $I$. It is then clear that $\sigma$ must also stabilise the subgroups $\mathcal C_I(G)$ and $\mathcal B_I(G)$ of $G$. Observe that, because $G$ is torsion-free of finite rank, its Malcev completion ($G\otimes \mathbb Q$ in additive notation) is a finite dimensional vector space over the rational numbers. Thus it is reasonable to use the language of vector spaces in this context.

\begin{thm}[cf. Theorem B of \cite{rig}]  \label{thmb}
The subgroup $B$ has a finite set $\mathcal Y$ of rank one subgroups which generate a dense subgroup of $B$ and so that any group of $k$-automorphisms of $k(G)$ which stabilises $G$ will have a subgroup of finite index which stabilises each element of $\mathcal Y$.

\end{thm}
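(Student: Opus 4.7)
The plan is to reduce the statement to the finitely generated case handled by Theorem B of \cite{rig} and then use the canonicity of the resulting rank-one subgroups to transfer the automorphism invariance.

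\textbf{Reduction to a finitely generated dense subgroup.} Since $G$ is torsion-free of finite rank, I would pick a finitely generated subgroup $G_0\le G$ with $G/G_0$ torsion. Let $I_0 = I\cap kG_0$, the kernel of the projection $kG_0\to k[G_0]$, and set $B_0 = \mathcal{B}_{G_0}(I_0)$. By Proposition~\ref{techprop} (the technical comparison alluded to in Section~\ref{SR}), $B$ coincides with the isolator of $B_0$ in $G$. In particular $B_0$ is dense in $B$, and the finite-dimensional $\QQ$-vector space $V := B\otimes\QQ$ coincides with $B_0\otimes\QQ$.

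\textbf{Constructing $\mathcal{Y}$.} Applying Theorem B of \cite{rig} to the finitely generated data $(kG_0,I_0)$ yields cyclic subgroups $\langle y_1\rangle,\dots,\langle y_m\rangle\le B_0$ whose product has finite index in $B_0$. Let $Y_i$ be the isolator in $G$ of $\langle y_i\rangle$. Each $Y_i$ is a rank-one subgroup of $B$, and $\langle Y_1,\dots,Y_m\rangle$ is dense in $B$ because its isolator in $G$ contains $B_0$ and hence equals $B$. Set $\mathcal{Y} = \{Y_1,\dots,Y_m\}$.

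\textbf{Automorphism invariance.} Let $\Sigma$ be a group of $k$-automorphisms of $k(G)$ stabilising $G$. As explained in the paragraph preceding the theorem, each $\sigma\in\Sigma$ extends to a ring automorphism of $kG$ preserving $I$, hence preserves $B$, and so acts $\QQ$-linearly on $V$. Set $L_i = Y_i\otimes\QQ$; the lines $L_1,\dots,L_m$ span $V$. It suffices to show that $\Sigma$ permutes the finite set $\{L_1,\dots,L_m\}$: the kernel of the induced map $\Sigma\to\mathrm{Sym}(\{L_1,\dots,L_m\})$ then has finite index in $\Sigma$ and stabilises every $L_i$, hence every $Y_i = L_i\cap B$.

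\textbf{The main obstacle.} The step that requires real work is the permutation property, because $\sigma\in\Sigma$ need not stabilise the auxiliary subgroup $G_0$, so Theorem B of \cite{rig} does not apply to $\Sigma$ through $G_0$ directly. The strategy is to establish that $\{L_1,\dots,L_m\}$ is canonical, i.e.\ depends only on the $\Sigma$-invariant pair $(G,I)$. Given $\sigma\in\Sigma$, the elements $\sigma(y_1),\dots,\sigma(y_m)$ lie in some finitely generated dense subgroup $G_0'\supseteq G_0$; reapplying Theorem B of \cite{rig} to $G_0'$ produces a second finite set of lines in $V$ with the same rigidity property. Matching the two systems via the f.g.\ rigidity statement---or, equivalently, extracting an intrinsic characterisation of the $L_i$ from $I$ alone (along the lines of Segal \cite[Proof of Theorem 2]{seg2})---is the heart of the argument and the step I expect to be technically delicate.
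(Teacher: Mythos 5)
Your architecture matches the paper's: restrict to a finitely generated dense subgroup $G_0$, check that $B_0=B\cap G_0$ is the relevant subgroup for $(kG_0, I\cap kG_0)$, invoke Theorem B of \cite{rig}, and then transfer the conclusion back to $B$ via the identification $B_0\otimes\QQ\cong B\otimes\QQ$. But you stop exactly at the point where the proof actually has content: you concede that the permutation of the lines $L_1,\dots,L_m$ by an automorphism group $\Sigma$ that need not stabilise $G_0$ requires ``an intrinsic characterisation of the $L_i$ from $I$ alone'' and you defer it as ``the heart of the argument.'' As written, the proposal therefore has a genuine gap, and your suggested fallback --- applying Theorem B of \cite{rig} a second time to a larger $G_0'\supseteq G_0$ containing the $\sigma(y_i)$ and ``matching the two systems'' --- does not close it, because Theorem B of \cite{rig} is an existence statement with a finite-index invariance clause for automorphisms stabilising the finitely generated group in question; it does not by itself assert that the two finite sets of lines produced from $G_0$ and $G_0'$ agree, nor give a uniqueness statement that would let you match them.

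The paper supplies the missing canonicity by working in the dual $B^*=\Hom(B,\RR)$ with the invariant $\Delta$ (the set of characters induced by valuations). Since every valuation on $k[B_0]$ extends to $k[B]$, the canonical isomorphism $B_0^*\cong B^*$ identifies $\Delta(B_0)$ with $\Delta(B)$. What one extracts from (the proof of) Theorem B of \cite{rig} is not merely a finite set of cyclic subgroups of $B_0$ but a finite set $\mathcal X$ of one-dimensional subspaces of $B_0^*$ arising as one-dimensional intersections of members of the Bergman carrier of $\Delta(B_0)=\Delta(B)$. The Bergman carrier of $\Delta(B)$ is a finite collection of subspaces attached to $k[B]\subseteq K$ itself, with no reference to $G_0$; any group of $k$-automorphisms of $k(G)$ stabilising $G$ stabilises $B$, hence $\Delta(B)$, hence permutes its Bergman carrier and the finite set $\mathcal X$, so a finite-index subgroup fixes each element of $\mathcal X$. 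Dualising (via codimension-one subspaces of $B^*$ with trivial intersection) then yields the rank-one subgroups $\mathcal Y$ of $B$. This passage through $\Delta$ and the Bergman carrier is precisely the intrinsic characterisation you identified as necessary but did not provide.
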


Note that, in particular, any group of $k$-automorphisms which stabilises $G$ will have a subgroup of finite index which acts diagonally on $B$ with respect to a basis consisting of elements from the subgroups contained in $Y$. The subgroup induced within the automorphism group of $B$ will therefore be virtually abelian.

We prove Theorem \ref{thmb} (modulo the technical Proposition \ref{techprop}) in Section \ref{PfThmB}. The proof is a relatively straightforward extension of the proof of Theorem B of \cite{rig}. 
\begin{thm}[cf. Corollary of  \cite{rig}] \label{cor} Suppose that $\Gamma$ is a finitely generated group of $k$-automorphisms of $k(G)$ and that $\Gamma_B$ is the group of automorphisms of $B$ induced by $\Gamma$. Let $\Gamma_1$ denote the subgroup of finite index of $\Gamma_B$ consisting of those elements which have diagonal action on $B$.
\begin{itemize}
\item If $k$ has characteristic zero then $\Gamma_1$ is finite.
\item If $k$ has characteristic $p$ then there is a maximal independent subset $\mathcal V$ of $B$ so that,  if $\gamma\in \Gamma_1$ and $b\in \mathcal V$ then there exists an integer $n$ so that $\gamma(b) =b^{p^n}$.
\end{itemize}
\end{thm}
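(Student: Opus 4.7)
The plan is to use Theorem~\ref{thmb} to recast elements of $\Gamma_1$ as tuples of rational scalars, one per rank-one subgroup in $\mathcal{Y}$, and then to classify which scalars can arise by invoking Segal's observation from \cite[Proof of Theorem 2]{seg2}; this classification is precisely the ingredient missing from the purely group-theoretic conclusion of Theorem~\ref{thmb}.

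First, by Theorem~\ref{thmb} every $\gamma \in \Gamma_1$ stabilises each $Y \in \mathcal{Y}$, inducing a group automorphism of $Y$. Since $Y$ is rank-one and torsion-free, this automorphism is multiplication by a rational scalar $\lambda_Y(\gamma) \in \mathrm{Aut}(Y) \leq \QQ^*$; choosing a generator $v_Y$ of each $Y$, we have $\gamma(v_Y) = v_Y^{\lambda_Y(\gamma)}$. Assembling these yields a homomorphism $\chi \colon \Gamma_1 \to \prod_{Y \in \mathcal{Y}} \mathrm{Aut}(Y)$, injective because $\mathcal{Y}$ generates a dense subgroup of $B$ and $\Gamma_1$ acts diagonally on $B$. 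Since $\Gamma$ is finitely generated, so is $\Gamma_B$, and hence also its finite-index subgroup $\Gamma_1$; thus $\chi(\Gamma_1)$ is a finitely generated abelian subgroup of $\prod_Y \QQ^*$.

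The substance of the theorem is then the classification of which scalars can occur. Segal's observation, the new ingredient beyond Theorem~\ref{thmb}, provides the dichotomy: in characteristic zero the only possibilities are $\lambda_Y(\gamma) \in \{\pm 1\}$, while in characteristic $p$ the possibilities are (up to a sign absorbed in the choice of $\mathcal{V}$) integer powers of $p$. Granting this, the characteristic-zero assertion is immediate, since $\chi(\Gamma_1)$ is contained in the finite group $\prod_Y \{\pm 1\}$ and hence $\Gamma_1$ is finite. For characteristic $p$, one selects $\mathcal{V}$ to consist of one generator from each $Y \in \mathcal{Y}$, modified as needed so that the signs are trivial, and reads off the required form $\gamma(b) = b^{p^n}$.

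The main obstacle is the classification itself. The group-theoretic condition $\lambda_Y(\gamma) \cdot Y = Y$ alone is far too permissive, allowing any $\lambda \in \QQ^*$ built from the divisibility primes of $Y$, so some genuinely algebraic input is required. The crucial fact, which is what Segal established while analysing related problems in \cite{seg2}, is that a $k$-algebra automorphism of $k[G]$ must respect the $k$-linear relations among elements of $G$ inside the embedding $k[G] \hookrightarrow K$; in characteristic zero no non-trivial power map $t \mapsto t^\lambda$ (with $|\lambda| \neq 1$) can be extended consistently to such an automorphism, whereas in characteristic $p$ the Frobenius $t \mapsto t^p$ and its formal inverse (available once $Y$ is $p$-divisible) furnish the only additional freedom. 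Transplanting this argument to the present setting of controllers and dense rank-one subgroups is where the real work lies.
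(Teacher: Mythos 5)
Your reduction of the problem to classifying the scalars $\lambda_Y(\gamma)\in\QQ^*$ is sound and matches the starting point of the paper, but the classification itself is the entire content of the theorem, and your proposal does not prove it: you explicitly defer it to ``Segal's observation'' and concede that ``transplanting this argument \dots is where the real work lies.'' The heuristic you offer in its place --- that a $k$-automorphism must ``respect the $k$-linear relations among elements of $G$'' and that therefore no power map with $|\lambda|\neq 1$ extends in characteristic zero --- is not an argument and is not even the right mechanism: the constraint does not come from linear relations in any direct sense, but from the structure of the controller of the ideal $I=\ker(kG\to k[G])$, which is a $\Gamma_1$-invariant subgroup.

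The paper's proof runs as follows, and each step is missing from your proposal. First, because $\Gamma_1$ is finitely generated and acts diagonally, its eigenvalues involve only finitely many primes, so the $\Gamma$-closure $H_1$ of a finitely generated dense subgroup $H$ of the controller of $I$ is a \emph{minimax} group; this is what makes Segal's theorems applicable at all, and your proposal never performs this reduction. Second, one invokes Segal's structure theorems for controllers in group rings of minimax groups: in characteristic zero, Theorem 1.1 of \cite{seg1} gives that the controller of $I\cap kH_1$ is \emph{finitely generated}; in characteristic $p$, Lemma 6 of \cite{seg2} gives that it is an extension of a finitely generated group by a $p$-group. Third, since this controller is stabilised by $\Gamma_1$, a diagonal automorphism of a finitely generated group must have eigenvalues $\pm 1$ (whence $\Gamma_1$ acts finitely on $H$ and hence on its isolator $B$), while a diagonal automorphism of a (finitely generated)-by-$p$-group must have eigenvalues that are $\pm$ powers of $p$. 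Without steps one and two, the group-theoretic setup you describe (an injection $\Gamma_1\hookrightarrow\prod_Y\QQ^*$ with finitely generated image) permits arbitrary scalars supported on the divisibility primes of the $Y$, exactly as you note, so the gap is genuine and is not closed by the algebraic heuristic you sketch.
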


This result has also been proved by Brookes  and, in a slightly different form,  by Segal\cite[Section 3.1]{seg2}.

It is clear that $p$th powering is always a monomorphism (at least) of a field of characteristic $p$. How much can the automorphisms of Theorem \ref{cor} differ from this? 

Suppose that $G$ is the direct sum of 4 copies of the $p$-adic rationals generated (as module over the $p$-adic rationals) by $\{x,y,u,v\}$. Let $I$ be the prime ideal of $kG$ generated by the closure, under all powers of the $p$th powering automorphism, of $x+y+1$ and $u+v+1$. It is clear that $I$ is invariant under both the map which sends $x$ and $y$ to their $p$th powers and fixes $u,v$ and the map which fixes $x,y$ and sends  $u$ and $v$ to their $p$th powers. Thus, in this case, the group $\Gamma$ is not virtually cylic.

This occurs, however, because $kG/I$ has a tensor product decomposition and our aim is to show that something of this kind is always the case.

\begin{thm} \label{xthm} Suppose that $k$ is the field with $p$ elements. Suppose that $G$ is an extension of a finitely generated group by a $p$-group. Let $\gamma$ be an automorphism of $k(G)$ which stabilises $G$ and  which has restriction to $B$ denoted by $\gamma_1$. Then either some power of $\gamma_1$ coincides with $p^n$th powering for some integer $n$ or $B$ has a dense subgroup $B_1$ so that $B_1$ is a  non-trivial direct product $B_1\cong C \times D$ and $k[B_1]$ is a free join of $k[C]$ and $k[D]$. 
\end{thm}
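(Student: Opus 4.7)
The plan is to apply Theorem~\ref{cor} to the cyclic subgroup $\langle \gamma \rangle$ and then dissect the resulting diagonal action. After replacing $\gamma$ by a suitable positive power we may assume $\gamma_1 \in \Gamma_1$, so Theorem~\ref{cor} yields a maximal independent subset $\mathcal V \subseteq B$ and integers $n_b$ ($b \in \mathcal V$) with $\gamma_1(b) = b^{p^{n_b}}$.

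The argument splits depending on whether the $n_b$ share a common value. If every $n_b$ equals a fixed integer $n$, then the two group homomorphisms $\gamma_1$ and the $p^n$th power map $\phi^n$ agree on the dense subgroup $\langle \mathcal V \rangle$; since every $b \in B$ has some positive power in $\langle \mathcal V \rangle$ and $B$ is torsion-free, this forces $\gamma_1(b) = b^{p^n}$ everywhere, yielding the first alternative. Note that this case exhausts the possibility of any power $\gamma_1^m$ being $p^N$th powering, since $\gamma_1^m(b) = b^{p^{m n_b}}$ and a common value of $m n_b$ requires common $n_b$.

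Otherwise, fix a value $n_*$ attained and partition $\mathcal V = \mathcal V_1 \sqcup \mathcal V_2$ with $\mathcal V_1 = \{b \in \mathcal V : n_b = n_*\}$, both pieces nonempty. Set $C = \langle \mathcal V_1 \rangle$, $D = \langle \mathcal V_2 \rangle$, and $B_1 = CD$; independence of $\mathcal V$ forces $\langle \mathcal V_1 \rangle \cap \langle \mathcal V_2 \rangle = 1$, so $B_1 \cong C \times D$, and $B_1 \supseteq \langle \mathcal V \rangle$ is dense in $B$. The substance of the theorem is then the assertion that $k[B_1]$ is the free join of $k[C]$ and $k[D]$, i.e.\ that the multiplication map $k[C] \otimes_k k[D] \to k[B_1]$ is injective.

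This injectivity is the principal obstacle. The starting observation is that on $k[C]$ the automorphism $\gamma_1$ coincides with the pure Frobenius $\phi^{n_*}$ (using $\alpha^p = \alpha$ for $\alpha \in \FF_p$ together with the freshman's dream), whereas on $k[D]$ it acts as a genuinely different diagonal automorphism, disagreeing with $\phi^{n_*}$ on every generator in $\mathcal V_2$. The plan is to take an alleged nontrivial relation $\sum_{i=1}^r c_i d_i = 0$ in $k[B_1]$ of minimal length, with $c_i \in k[C]$ and $d_i \in k[D]$, apply both $\gamma_1$ and $\phi^{n_*}$ to it, and subtract to obtain
\[
\sum_{i=1}^r c_i^{p^{n_*}} \bigl(\gamma_1(d_i) - d_i^{p^{n_*}}\bigr) = 0,
\]
a relation of the same type in which the $D$-entries have been replaced by their images under the $k$-linear operator $T := \gamma_1 - \phi^{n_*}$. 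Since $T$ has very limited kernel on $k[D]$ (vanishing on no nontrivial element of $D$ itself, by independence of $\mathcal V$ and the fact that $D$ is torsion-free), iterating this procedure should eventually produce a strictly shorter relation, contradicting minimality. The hypothesis that $G$ is an extension of a finitely generated group by a $p$-group is what one expects to turn this descent into a rigorous finite process, and completing the argument will presumably lean on the ideal-theoretic machinery of Segal~\cite[Section~3.1]{seg2}.
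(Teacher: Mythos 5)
The first half of your argument is sound and matches the paper's strategy: apply Theorem \ref{cor}, split on whether the exponents $n_b$ are all equal, and in the inhomogeneous case build a dense subgroup $B_1=C\times D$ from the eigenspace decomposition (the paper uses the full decomposition into eigenspaces $E_1,\dots,E_k$ rather than your two-block coarsening, but that difference is harmless). The genuine gap is exactly where you flag it: the free join property. Two problems. First, you have recast the goal as injectivity of $k[C]\otimes_k k[D]\to k[B_1]$, which is \emph{stronger} than what is asserted; the free join condition in Zariski--Samuel (and in the theorem) only requires that a transcendence set of $k[C]$ together with one of $k[D]$ remain algebraically independent, and the paper explicitly notes that a free join is in general a proper quotient of the tensor product. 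Second, and more seriously, your descent does not close. The relation $\sum c_i^{p^{n_*}}(\gamma_1(d_i)-d_i^{p^{n_*}})=0$ has the same number of terms as the original unless some term vanishes, and the operator $T=\gamma_1-\phi^{n_*}$ is not multiplicative, so iterating it gives no control; moreover your observation that $T$ kills no nontrivial element of the group $D$ says nothing about its kernel on $k[D]$ or on the relevant function field, which is what the descent would need.

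The paper's mechanism is different in two essential ways, and both are needed. It works with a relation $\sum l_m m=0$ over \emph{monomials} $m$ in a transcendence set of one eigenspace-algebra, with coefficients $l_m$ in the field $L$ generated by the others; applying $\gamma$ and $p^n$th powering and eliminating one monomial $m_0^{p^n}$ then genuinely shortens the relation, forcing the functional equations $\gamma(l_m/l_{m_0})=(l_m/l_{m_0})^{p^n}$ (Lemma \ref{2lemma}). It then needs a separate rigidity statement (Lemma \ref{1lemma}): the only solutions of $\gamma(a)=a^{p^N}$ in $L$ are constants, proved by a degree/UFD comparison that requires $N$ to be \emph{strictly larger} than every exponent occurring on $L$. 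This forces the induction to adjoin the eigenspaces one at a time in increasing order of exponent, and is why a monomorphism with all exponents non-negative must first be manufactured by composing with a power of Frobenius (your $\phi^{n_*}$ and $c_i^{p^{n_*}}$ are not even defined in $k[C]$ if $n_*<0$). Your two-block split, in which $\gamma_1$ is not a single power map on $D$, has no analogue of Lemma \ref{1lemma} and no ordering of exponents, so the key algebraic-independence step remains unproved; the appeal to Segal's machinery does not supply it.
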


The {\it free join} is described in Zariski and Samuel [III,15, Definition 1]\cite{zs}. In this context, it means that a transcendence subset (over $k$) of $k[C]$ joined with a transcendence subset of $k[D]$ is still a transcendence subset. The free join is very close to the tensor product in that a free join of two integral domains is always the quotient of the corresponding tensor product by a prime ideal which consists entirely of zero-divisors.

It is fairly clear that when $k[B]$ is a free join as described then  it is possible to produce automorphisms which are not simple $p^n$th powering for some fixed $n$.

\section{Proof of Theorem \ref{thmb}}\label{PfThmB}

The following proposition is very similar to Proposition 2.1 of  \cite{rig}. 

\begin{prop}\label{techprop} Let $H$ be a subgroup of $G$ so that $G/H$ is torsion-free. Then the following are equivalent:

\begin{enumerate}
\item[\textnormal{(i)}] $I$ is controlled by $H$;
\item[\textnormal{(ii)}] $k[G]$ is induced from $k[H]$;
\item[\textnormal{(iii)}] The transcendence degree of $k[G]$ over $k[H]$ is equal to the rank of $G/H$;
\item[\textnormal{(iv)}]The complete inverse image, under the restriction map, of $\Delta(H)$ is $\Delta(G)$.
\end{enumerate}
\end{prop}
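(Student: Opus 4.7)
The plan is to establish the cyclic chain of implications $(i)\Rightarrow(ii)\Rightarrow(iii)\Rightarrow(iv)\Rightarrow(i)$, following the outline of Proposition 2.1 of \cite{rig} but adapting each step to the situation in which $G/H$ is torsion-free of finite rank rather than free abelian. The assumption that $G/H$ is torsion-free (so coset representatives lift Malcev-independent elements of $G/H$ to multiplicatively independent elements of $G$) is used throughout.

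For $(i)\Rightarrow(ii)$, I would fix a left transversal $T$ of $H$ in $G$, write $kG=\bigoplus_{t\in T}(kH)t$, and simply quotient by $I=(I\cap kH)kG$ to obtain $k[G]\cong\bigoplus_{t\in T}(k[H])\bar t$. For $(ii)\Rightarrow(iii)$, I would pick elements $t_1,\ldots,t_r\in G$, where $r=\rank(G/H)$, whose images form a $\QQ$-basis of the Malcev completion of $G/H$. Distinct monomials in the $t_i$ represent distinct cosets of $H$, so by $(ii)$ they are $k[H]$-linearly independent, which forces the $t_i$ to be algebraically independent over $k[H]$. Conversely every $g\in G$ has a power in $H\langle t_1,\ldots,t_r\rangle$, so $k[G]$ is integral over $k[H][t_1,\ldots,t_r]$, giving $\td(k[G]/k[H])=r$. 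For $(iv)\Rightarrow(i)$ I would argue directly: the inverse image condition means that every element of $\Delta(G)$ can already be detected on the $kH$-component relative to a transversal, which is precisely the statement that $\Delta(G)=(\Delta(G)\cap kH)kG$, i.e.\ that $I$ is controlled by $H$.

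The implication $(iii)\Rightarrow(iv)$ is the step I expect to be the main obstacle. In the finitely generated setting treated in \cite{rig}, one can quote a polynomial-ring dimension argument directly because a transversal of $H$ in $G$ is multiplicatively closed modulo $H$. Here $G/H$ is only torsion-free of finite rank, so one has to be more careful. My approach would be to reduce to the finitely generated case: choose a finitely generated subgroup $G_0\le G$ containing $H_0:=G_0\cap H$ together with $t_1,\ldots,t_r$ so that $G_0/H_0$ has the same rank $r$ as $G/H$, verify that condition $(iii)$ descends to this pair (since the transcendence degree of $k[G_0]$ over $k[H_0]$ is at most $r$ and at least the number of algebraically independent $t_i$), apply the original Proposition 2.1 of \cite{rig} to conclude $(iv)$ for $(G_0,H_0)$, and then pass to the limit. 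The limiting step uses that $kG$ is the directed union of the subrings $kG_0$ as $G_0$ ranges over the finitely generated subgroups containing a prescribed one, and that the restriction map and $\Delta$ are both compatible with these inclusions.

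Finally I would reconcile this proposition with the definition of $B=\mathcal{B}_G(I)$ used in \cite{rig}: the equivalence of $(i)$ and $(iii)$ shows that the isolator of the controller of $I$ can be characterised by the transcendence-degree condition, which is the definition used in the finitely generated setting. This justifies the assertion made in Section \ref{SR} that the two definitions of $B$ agree.
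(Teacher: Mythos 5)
Your steps $(i)\Rightarrow(ii)$ and $(ii)\Rightarrow(iii)$ are sound and close in spirit to the paper's arguments (the paper does $(i)\iff(ii)$ by tensoring the sequence $I\cap kH\rightarrowtail kH\twoheadrightarrow k[H]$ with the free $kH$-module $kG$, and handles the transcendence degree via a complement $X$ with $H\cap X=\{1\}$ and the Laurent polynomial ring $k(H)[X]$). But there is a genuine gap at $(iv)\Rightarrow(i)$, and it is exactly where the real content of the proposition lives. You assert that the inverse-image condition on $\Delta(H)$ ``is precisely the statement that'' $I$ is controlled by $H$, and write $\Delta(G)=(\Delta(G)\cap kH)kG$; this conflates the character set $\Delta(G)\subseteq\Hom(G,\RR)$ with the ideal $I\subseteq kG$ and is not an argument. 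Condition $(iv)$ is a statement about which valuations of $k[G]$ restrict to valuations of $k[H]$, and passing from it to a statement about generation of the ideal $I$ requires the machinery of \cite{berg}: the paper closes the cycle by proving $(iv)\Rightarrow(iii)$ using the dimension formula $\dim\Delta(G)=\dim\Delta(H)+\rank(G/H)$ together with Theorem~A of \cite{berg} (which identifies $\dim\Delta(G_1)$ with the transcendence degree of $k[G_1]$ for finitely generated dense $G_1$), and then routes $(iii)\Rightarrow(ii)\Rightarrow(i)$. Without some such input your cycle does not close and the equivalence is not established.

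A secondary, repairable issue is your $(iii)\Rightarrow(iv)$. The paper avoids this implication entirely (it proves $(ii)\Rightarrow(iv)$ directly, by extending a valuation on $k[H]$ across the Laurent polynomial ring $k[HX]$ with arbitrary values on a basis of $X$). Your route through finitely generated pairs $(G_0,H_0)$ and a direct limit needs more care than you give it: the identification $\Delta(G)=\Delta(G_0)$ used in the paper holds only for \emph{dense} subgroups $G_0$ (via extension of valuations), whereas your $G_0$ is an arbitrary finitely generated subgroup, so the claim that ``$\Delta$ is compatible with these inclusions'' in the limit — i.e.\ that a character whose restriction to every finitely generated subgroup is valuation-induced is itself valuation-induced — is precisely the kind of compactness statement that must be justified rather than assumed. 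I recommend replacing your last two implications by the paper's $(ii)\Rightarrow(iv)$ and $(iv)\Rightarrow(iii)$.
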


We shall defer the proof of this to Section \ref{PfTechProp}. It is technical and, given the proof of the corresponding result in  \cite{rig}, not very surprising.

Observe that it follows quickly from Proposition \ref{techprop} that $B$, the isolator of the controller of $I$, is also the unique least isolated subgroup of $G$ satisfying (iii) of the proposition. Thus  the definition we have given here of the subgroup $B$ coincides with that of \cite{rig}.

\begin{proof}[Proof of Theorem \ref{thmb}] We shall denote the dual $\Hom(E, \mathbb R)$ of a group $E$ by $E^*$. Let $G_0$ be a finitely generated dense subgroup of $G$. Then the embedding of $G_0$ into $G$ induces an isomorphism of $G^*$ with $G_0^*$. Also, as a valuation on $k[G_0]$ will extend  to a valuation on $k[G]$, this isomorphism will identify $\Delta(G)$ with $\Delta(G_0)$.

 Observe that $B_0=B\cap G_0$ is isolated in $G_0$ and the transcendence degree of $k[G_0]$ over $k[B_0]$ is equal to the rank of $G_0/B_0$ since both are equal to the rank of $G/B$. Thus $B_0$ is the isolator of the controller of $I\cap kG_0$ in $G_0$. We wish to apply Theorem B of \cite{rig}. This theorem assumes that there no algebraic elements in $G_0$, which is clear from our assumptions, and that $B_0=G_0$. (Note that there is another inconsistency in \cite{rig} in that the definitions imply that $G/B$ is torsion-free whereas the theorem requires $G/B$ to be finite rather than trivial. But this causes no major problems.) If $B_0<G_0$ then we simply work with $B_0$ replacing $G_0$ and deduce from Theorem B of \cite{rig} that  there exists a finite set $\mathcal X$ of one dimensional subspaces of $B_0^*$ which are one-dimensional intersections of spaces  in the Bergman carrier of $\Delta(B_0)$. But  $B_0^*=B^*$ and $\Delta(B_0)=\Delta(B)$ so that this set of subspaces is determined also by the Bergman carrier of $\Delta(B)$. 

Any  group of $k$-automorphisms of $k(G)$ which stabilise $G$ must also stabilise $B$ and hence $\Delta(B)=\Delta(B_0)$. Thus it will also stabilise the (finite) Bergman carrier of $\Delta(B)$ and, finally, also the set $\mathcal X$ of one dimensional intersections arising from this Bergman carrier. Thus a subgroup of finite index will stabilise each element of $\mathcal X$.

Finally, observe that we can construct from $\mathcal X$ a finite set of subspaces of $B^*$ of co-dimension one which intersect trivially. Then,  by looking at the duals of these subspaces, we have a finite set $\mathcal Y$ of subgroups of $B$ having rank one and which generate a dense subgroup of $B$. The fact that each element of $\mathcal X$ is stabilised will imply also that each element of $\mathcal Y$ is stabilised,  as required.

\end{proof}

\section{ Proof of Theorem \ref{cor}}\label{PfCorB}

The elements of $\Gamma_1$ have diagonal action on $B$ and so, because $\Gamma_1$ is finitely generated, the eigenvalues of elements of $\Gamma_1$ can involve only finitely many primes. 
 
Observe that the controller of $I$ is dense in $B$  and let $H$ be some finitely generated dense subgroup of the controller of $I$. Hence $H$ is also dense in $B$. Let $H_1$ denote  the $\Gamma$-closure of $H$ in $B$. Then $H_1$ is a minimax group and so we can apply the theorems of Segal in \cite{seg1},\cite{seg2}. 

Suppose that $k$ has characteristic zero. Then Theorem 1.1 of Segal\cite{seg1} tells us that the controller of $I\cap kH_1$ is finitely generated. There is no loss in  assuming that this controller is $H$ itself. But  the controller is clearly stabilised by any automorphism of $B$ which stabilises $I$. Thus $\Gamma_1$ induces a group of diagonal automorphisms with integer coefficients on the finitely generated group $H$ and so must act finitely on $H$. Thus $\Gamma_1$ also acts finitely on the isolator, $B$, of $H$, as required.

Suppose that $k$ has finite characteristic $p$. We follow the argument of Segal in the proof of Theorem 2 of \cite{seg2}. Using Lemma 6, we have that $H_1$ has a subgroup $C$ which is an extension of a finitely generated group by a $p$-group so that $C$ controls $I$. (Observe that the proof of Lemma 6 does not require that $k$ be finite). Thus the controller of $I$ in $H_1$ is also an extension of a finitely generated group by a $p$-group and is invariant under the action of $\Gamma_1$. Let $\Gamma_2$ be the subgroup of finite  index in $\Gamma_1$ which consists of diagonal automorphisms. Since $H_1$ is an extension of a finitely generated group by a $p$-group, the eigenvalues of each of these automorphisms must be (plus or minus) powers of $p$. The second claim of  Theorem \ref{cor} follows.

 \section{Proof of Theorem \ref{xthm}}\label{extrathm}
 
We shall assume, as in the statement of Theorem  \ref{xthm}, that $k$ is the field with $p$ elements (so that $p$th powering is a $k$-automorphism) and that $G$ is an extension of a finitely generated subgroup by a $p$-group.

\begin{lem}\label{1lemma} Let $k\subseteq L=k(X)\subseteq K$  and suppose that $X$ is a transcendence  set over $k$. Let $\gamma$ be a $k$-monomorphism of $L$ so that, for each $x\in X$, $\gamma(x)=x^{p^{n_x}}$ for some non-negative integer $n_x$. 

Let $N$ be an integer satisfying $N>n_x$ for all $x\in X$. If, for some $a\in L$, we have $\gamma(a)=a^{p^N}$ then $a\in k$.
\end{lem}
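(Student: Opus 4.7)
The plan is to convert the hypothesis $\gamma(a) = a^{p^N}$ into a fixed-point equation $\psi(a) = a$ for a second $k$-endomorphism $\psi$ of $L$ chosen to absorb the discrepancy between $\gamma$ and the $p^N$th power map, and then to iterate and conclude via unique factorisation.

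First I will reduce to a finitely generated situation: since $a$ involves only finitely many elements of $X$, say $x_1,\dots,x_n$, I may replace $X$ by this finite subset and $L$ by $k(x_1,\dots,x_n)$. Set $n_i = n_{x_i}$ and $m_i = N - n_i$; by hypothesis each $m_i \geq 1$. Algebraic independence of the $x_i$ produces a unique (and automatically injective) $k$-endomorphism $\psi$ of $L$ with $\psi(x_i) = x_i^{p^{m_i}}$. The composition $\gamma\circ\psi$ and the $p^N$th power map are both $k$-endomorphisms of $L$ sending $x_i$ to $x_i^{p^N}$, hence they coincide. Combined with $\gamma(a) = a^{p^N}$ and the injectivity of $\gamma$, this yields $\psi(a) = a$, so $a = \psi^r(a)$ for every $r\geq 1$.

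Next I will extract infinite $p$-divisibility of $a$ in $L^*$. Write $a = f/g$ with $f, g \in k[x_1,\dots,x_n]$, and set $t_r = r\min_i m_i$. Because $k = \FF_p$, for any $h \in k[x_1,\dots,x_n]$ and any exponents $a_i \geq t$ the Frobenius identity gives
$$
h(x_1^{p^{a_1}},\dots,x_n^{p^{a_n}}) = h(x_1^{p^{a_1 - t}},\dots,x_n^{p^{a_n - t}})^{p^{t}}.
$$
Applied to $f$ and $g$ with $a_i = r m_i$ and $t = t_r$, this exhibits $a = \psi^r(a)$ as a $p^{t_r}$th power in $L^*$, for every $r$.

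Finally, unique factorisation in the polynomial ring $k[x_1,\dots,x_n]$ identifies $L^*/k^*$ with the free abelian group on the set of non-associate monic irreducibles; in particular $L^*/k^*$ is torsion-free and has no non-trivial infinitely $p$-divisible element. Since $t_r\to\infty$, the class of $a$ in $L^*/k^*$ must be trivial, so $a \in k^*$ and hence $a \in k$. The only genuinely content-bearing step is the pairing of $\gamma$ with its partial inverse $\psi$ and the resulting fixed-point equation; once $\psi(a)=a$ is in hand, both the Frobenius identity and the appeal to unique factorisation are routine.
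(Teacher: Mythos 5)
Your proof is correct, but it takes a genuinely different route from the paper's. The paper's argument is a direct degree count: write $a=u/v$ in lowest terms in the UFD $k[X]$, cross-multiply $\gamma(u)v^{p^N}=\gamma(v)u^{p^N}$ to see that $v^{p^N}$ divides $\gamma(v)$, and observe that $\deg\gamma(v)\le d\max_x p^{n_x}<dp^N=\deg v^{p^N}$ unless $d=0$; the same for $u$. Your argument instead builds the complementary endomorphism $\psi(x_i)=x_i^{p^{N-n_i}}$, converts the hypothesis into the fixed-point equation $\psi(a)=a$, and derives a contradiction from infinite $p$-divisibility of a non-trivial class in the free abelian group $L^*/k^*$. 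Both proofs ultimately rest on unique factorisation in the polynomial ring; yours trades the degree comparison for a divisibility argument, which is arguably more structural (it makes visible \emph{why} the hypothesis is rigid: $a$ would have to be a $p^t$th power for all $t$). Two caveats. First, your identification of $\gamma\circ\psi$ with $p^N$th powering, and your Frobenius identity for the coefficients of $f$ and $g$, both require that every element of $k$ be fixed by the Frobenius, i.e.\ essentially $k=\FF_p$; this is the standing assumption of Section 5, so it is legitimate here, but note that the paper's degree argument works verbatim for an arbitrary field $k$ of characteristic $p$ (the statement of the lemma does not itself restate $k=\FF_p$). Second, you should dispose of the trivial case $a=0$ before passing to $L^*$. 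Neither point is a gap in substance.
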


\begin{proof} Suppose that $a=u/v$ with $u,v\in k[X]$. Observe that, as $X$ is a transcendence set, $k[X]$ is a polynomial ring over the field $k$ and so is a unique factorisation domain. Thus we can assume that $u$ and $v$ have no common factor. If $\gamma(a)=a^{p^N}$ then 
$$\gamma(u)v^{p^N}=\gamma(v)u^{p^N}$$
Thus $v^{p^N}$ divides $\gamma(v)u^{p^N}$ but has no factor in common with $u^{p^N}$. Hence $v^{p^N}$ divides $\gamma(v)$. Suppose that the total degree of $v$ is $d$. Then the total degree of $v^{p^N}$ is $dp^N$.  But the total degree of $\gamma(v)$ is bounded by the maximum of  $vp^{n_x}$ for  $x\in X$ and this maximum is less than $dp^N$ unless $d=0$. That is, $v\in k$. A similar argument shows that $u\in k$ and so $a\in k$, as required.
\end{proof}

\begin{lem} \label{2lemma} Let $k \subseteq L\subseteq K$  and let $\gamma$ be a $k$-monomorphism of $K$ so that $\gamma(L)\subseteq L$.  Suppose that $X$ is a subset of $K$ so that $\gamma$ acts on $X$ by $p^n$th powering. Suppose also that, if $l\in L$ and $\gamma(l)=l^{p^n}$ then $l \in k$. 

If $X$ is a transcendence set over $k$ then it is also a transcendence set over $L$.
\end{lem}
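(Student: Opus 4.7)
The plan is to argue by contradiction: assume there is a nontrivial polynomial relation over $L$ among finitely many elements $x_1,\dots,x_m\in X$, and choose one of the form $\sum_{i=1}^{r} a_i x_1^{\alpha_{i,1}}\cdots x_m^{\alpha_{i,m}}=0$ with $a_i\in L\setminus\{0\}$, distinct exponent tuples $\alpha_i$, and the number of terms $r$ as small as possible. After scaling we may assume $a_1=1$.

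Next I would produce two new vanishing relations among the \emph{same} monomials $x_1^{p^n\alpha_{i,1}}\cdots x_m^{p^n\alpha_{i,m}}$: the first by applying $\gamma$ to the original relation (using $\gamma(L)\subseteq L$ and $\gamma(x_j)=x_j^{p^n}$) to get $\sum \gamma(a_i)\,x^{p^n\alpha_i}=0$, and the second by raising the original relation to the $p^n$th power (using the Frobenius in characteristic $p$) to get $\sum a_i^{p^n}\,x^{p^n\alpha_i}=0$. Subtracting these (and noting $\gamma(a_1)=1=a_1^{p^n}$ cancels the first term) yields
\[
\sum_{i=2}^{r}\bigl(\gamma(a_i)-a_i^{p^n}\bigr)\,x_1^{p^n\alpha_{i,1}}\cdots x_m^{p^n\alpha_{i,m}}=0,
\]
which is a polynomial relation over $L$ in $x_1,\dots,x_m$ with at most $r-1$ nonzero terms. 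By the minimality of $r$ each coefficient must vanish, so $\gamma(a_i)=a_i^{p^n}$ for every $i$.

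Applying the hypothesis of the lemma (any $l\in L$ with $\gamma(l)=l^{p^n}$ lies in $k$) then forces every $a_i\in k$, so the original relation is actually a nontrivial polynomial relation over $k$, contradicting that $X$ is a transcendence set over $k$. The only subtle point to get right is the bookkeeping guaranteeing that the subtracted relation genuinely has fewer nonzero terms and the same monomial support structure (which uses that the tuples $p^n\alpha_i$ remain distinct because the $\alpha_i$ are), but once that is in place the whole argument is a short minimality/Frobenius trick. No heavy machinery is needed.
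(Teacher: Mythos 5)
Your proof is correct and follows essentially the same route as the paper's: take a minimal-length relation over $L$, apply $\gamma$ and the $p^n$th power map to get two relations on the same shifted monomial support, subtract, and invoke minimality plus the hypothesis on $L$ to push the coefficients into $k$. The only cosmetic difference is that you normalise $a_1=1$ and subtract, whereas the paper cross-multiplies by $l_{m_0}^{p^n}$ and $\gamma(l_{m_0})$ to eliminate the chosen term without normalising.
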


\begin{proof} Let $\mathcal M$ denote the set of monomials in $X$. Suppose that $X$ is not a transcendence set over $L$ and let 
$$\sum_{m\in \mathcal M} l_m m=0$$ be a non-trivial expression, with $l_m\in L$, which expresses this fact. Suppose also that amongst all such expressions, our chosen one  involves the least number of monomials. Choose $m_0\in \mathcal M$ so that $l_{m_0}\neq 0$.

Applying first $\gamma$ and then $p^n$th powering to the expression, we obtain, respectively,
$$ \sum_{m\in \mathcal M} \gamma(l_m) m^{p^n} =0\quad\text{ and }\quad \sum_{m\in \mathcal M} l_m^{p^n} m^{p^n} =0$$
Eliminating $m_0^{p^n}$ between these two expressions, we obtain
$$ \sum_{m\in \mathcal M}( \gamma(l_m)l_{m_0}^{p^n}-\gamma(l_{m_0})l_m^{p^n})m^{p^n} =0$$
This is an expression which involves fewer monomials than the chosen one and so must be trivial. That is, for all $m$,

$$ \gamma(l_m)l_{m_0}^{p^n}-\gamma(l_{m_0})l_m^{p^n}=0$$
and so

$$ \gamma\left(\frac{l_m}{l_{m_0}}\right)=\left(\frac{l_m}{l_{m_0}}\right)^{p^n}$$
Thus, by our assumption,

$$\frac{l_m}{l_{m_0}}=k_m\in k$$
and so our original expression can be rewritten as
$$\sum_{m\in \mathcal M} k_ml_{m_0} m=l_{m_0}\left(\sum_{m\in \mathcal M} k_m m\right)=0$$
It follows that $\sum_{m\in \mathcal M} k_m m=0$ and so $X$ is not a transcendence set over $k$, contrary to hypothesis. The proof is complete.
\end{proof}

 \begin{prop} \label{xprop} Let $\gamma$ be a $k$-monomorphism of $K$ and let $X_1, \dots, X_e$ be subsets of $K$ so that the action of $\gamma$ on $X_i$ is to replace each element by its $p^{n_i}$th power. Suppose that the $n_i$ are distinct non-negative integers. If the sets $X_i$ are transcendental over $k$ then so also is $X=\cup_i X_i$.
\end{prop}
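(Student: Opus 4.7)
The plan is to prove the statement by induction on $e$, iteratively applying Lemmas \ref{1lemma} and \ref{2lemma}. The key preparatory move is that, since the exponents $n_i$ are distinct, I may reorder and assume $n_1 < n_2 < \dots < n_e$. With this ordering, the largest exponent $n_e$ strictly dominates all previous ones, which is precisely what is needed to invoke Lemma \ref{1lemma}.

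The base case $e=1$ is the hypothesis. For the inductive step, suppose $Y = X_1 \cup \dots \cup X_{e-1}$ has been shown to be a transcendence set over $k$, and set $L = k(Y)$. Since $\gamma$ sends each $x \in X_j$ to $x^{p^{n_j}} \in L$ for every $j < e$, the subfield $L$ is $\gamma$-stable, so $\gamma$ restricts to a $k$-monomorphism of $L$. I would then apply Lemma \ref{1lemma} to this restriction with $N = n_e$: because $n_e > n_j$ for every $j < e$, the hypothesis $N > n_x$ holds for each $x \in Y$, and the lemma yields that any $a \in L$ satisfying $\gamma(a) = a^{p^{n_e}}$ must lie in $k$.

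This is exactly the side hypothesis of Lemma \ref{2lemma}, which I would then invoke with the same $L$, with $X = X_e$ and $n = n_e$. Since $X_e$ is a transcendence set over $k$, Lemma \ref{2lemma} upgrades this to a transcendence set over $L = k(Y)$; combined with the inductive hypothesis that $Y$ is a transcendence set over $k$, we obtain that $X_1 \cup \dots \cup X_e$ is a transcendence set over $k$, completing the induction.

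The only real obstacle is verifying that the two lemmas dock together cleanly, and the crucial ingredient is the distinctness of the $n_i$: if two exponents coincided, the strict inequality required in Lemma \ref{1lemma} would fail at one of the inductive steps, and indeed elements of an $X_i$ with $n_i$ equal to the current target exponent would be explicit counterexamples to the side hypothesis of Lemma \ref{2lemma}. (As a minor aside, distinctness of the $n_i$ also forces the $X_i$ to be pairwise disjoint: a common element would satisfy $x^{p^{n_i}} = x^{p^{n_j}}$, forcing it algebraic over the prime field and contradicting transcendence of any $X_i$ containing it.)
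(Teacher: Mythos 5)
Your proof is correct and follows essentially the same route as the paper's: order the exponents increasingly, induct on $e$, apply Lemma \ref{1lemma} to $L=k(X_1\cup\dots\cup X_{e-1})$ with $N=n_e$ to get the side hypothesis of Lemma \ref{2lemma}, and conclude that $X_e$ is a transcendence set over $L$. The extra remarks on why distinctness is essential and on disjointness of the $X_i$ are accurate but not needed.
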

\begin{proof} Arrange the $X_i$ so that $n_1<n_2<\dots<n_e$. By an evident induction, we can assume that 
$$Y=\bigcup_{i=1\dots e-1} X_i$$
is a transcendence set over $k$. Let $L=k(Y)$. Then  $L\supseteq \gamma(L)$ and so $L$ and $\gamma$ satisfy the hypotheses of  Lemma \ref{1lemma}. Thus, if we take $N=n_e$ and $a\in L$, then we can deduce that, if $\gamma(a)=a^{p^{n_e}}$ then $a\in k$. 

Thus the hypotheses of Lemma \ref{2lemma} are also satisfied and, as $X_e$ is a transcendence set over  $k$, by choice, it is also a transcendence set over $L$. That is, $Y\cup X_e=\cup_{i=1\dots e} X_i$ is  a transcendence set over $k$.
\end{proof}

\begin{proof}[Proof of Theorem \ref{xthm}]  The theorem is almost contained in Proposition \ref{xprop}. We need first to make some adjustments, however. 
By Theorem \ref{cor}, some power $\gamma_2=\gamma_1^n$ of $\gamma_1$ acts diagonally on $B$ with eigenvalues which are powers of $p$.  

If $\gamma_2$ has eigenvalues which are negative powers of $p$ then combine $\gamma_2$ with $p^m$th powering, for some sufficiently large $m$, to obtain a monomorphism $\gamma_3$ for which all of the eigenvalues are non-negative powers of $p$. Notice that the number of distinct eigenvalues remains unchanged after this combination.

Let $E_1, \dots, E_k$ be the intersection, with $B$, of the eigenspaces of $\gamma_3$. Observe that the $E_i$ generate their direct sum $B_1$ and $B_1$ is dense in $B$. For each $i$, let $X_i$ be a transcendence set in $k[E_i]$. By Proposition \ref{xprop}, $X_1\cup \dots \cup X_k$ will also be a transcendence set over $k$. But then $k[B_1]$ is a free join of the subalgebras $k[E_i]$.

That is, either $k[B_1]$ is a non-trivial free join or there is only one eigenspace for $\gamma_2$. In the latter case, $\gamma_2$ must be $p^n$th powering for some $n$.
\end{proof}

\section{Appendix: Proof of Proposition \ref{techprop}} \label{PfTechProp}

The following observation is easily verified for a subgroup $H$ of $G$.
\begin{quote} 
There exists a finitely generated subgroup $X$ of $G$ so that $H\cap X=\{1\}$ and $HX$ is dense in $G$.
Further, if $G/H$ is torsion-free and if $G_1/H$ is finitely generated, then we can choose $X$ so that $HX=G_1$.
\end{quote}

\begin{proof}

\noindent $\textnormal{(i)} \iff\textnormal{(ii)}$

The kernel of the map $kH \rightarrow k[H]$ is clearly $I\cap kH$ and so there is a short exact sequence
$$ I \cap kH  \rightarrowtail kH \twoheadrightarrow k[H]$$
Since $kG$ is free when considered as $kH$-module, we can tensor this exact sequence over $kH$ with $kG$ to obtain an exact sequence
$$ (I \cap kH)\otimes_{kH} kG  \rightarrowtail kH\otimes_{kH} KG \twoheadrightarrow k[H]\otimes_{kH} KG$$that is,
$$ (I \cap kH)kG  \rightarrowtail KG \twoheadrightarrow k[H]\otimes_{kH} KG$$
Thus, $I=(I\cap kH)kG$ if and only if $k[G]=k[H]\otimes_{kH} KG$; that is, $I$ is controlled by $H$ if and only if $k[G]$ is induced from $k[H]$.

\noindent $\textnormal{(iii)} \Rightarrow \textnormal{(ii)}$

Observe that $k[G]$ is induced from $k[H]$ if and only if , for every subgroup $G_1$ of $G$ with $G_1/H$ finitely generated, $k[G_1]$ is induced from $k[H]$. Thus it will suffice to show that, if $G_1/H$ is finitely generated and $G_1$ is dense in $G$ then $k[G_1]$ is induced from $k[H]$. By the remark at the start of the Section, we can assume that $G_1=HX$ with $H\cap X=\{1\}$. 

Since $G_1$ is dense, $k[G_1]$ has the same transcendence degree as $k[G]$. Also the rank of $X$ must be the rank of $G/H$. Thus we have that the transcendence degree of $k(H)[X]$ over $k(H)$ equals the rank of $X$. But then $k(H)[X]$ is the Laurent polynomial ring over $k[H]$ and so $k[HX]$ is induced from $k[H]$. 

\noindent $\textnormal{(iv)} \Rightarrow \textnormal{(iii)}$

Observe that, if (iv) holds, then 
$$\dim \Delta(G)=\dim\Delta(H)+\rank(G/H)$$
Let $G_1$ be a finitely generated  dense subgroup of $G$ and $H_1=H\cap G_1$ so that $H_1$ is a finitely generated dense subgroup of $H$. 

Then $\dim\Delta(G)=\dim\Delta(G_1)$ is the transcendence degree of $k[G_1]$ over $k$ (by Theorem A of \cite{berg}) and this, in turn, is the transcendence degree of $k[G]$ over $k$. Similarly, $\dim\Delta(H)$ is the transcendence degree of $k[H]$ over $k$. Thus the rank of $G/H$ is the transcendence degree of $k[G]$ over $k[H]$.

\noindent $\textnormal{(ii)} \Rightarrow \textnormal{(iv)}$ 

Choose $X$ to be a finitely generated subgroup of $G$ so that $H\cap X=\{1\}$ and $HX$ is dense. 
Then $k[HX]$ is isomorphic to the Laurent polynomial ring over $k[H]$. It is then easily observed that a valuation on $k[H]$ can be extended to a valuation on $k[HX]$ with arbitrarily chosen values on a basis of $X$. Otherwise put, if $\chi \in (HX)^*$ and $\chi_H\in \Delta(H)$ then $\chi \in \Delta(HX)$. We can replace $HX$ by $G$ in this last sentence since $HX$ is dense in $G$ and so $(HX)^*=G^*$ and $\Delta(HX)=\Delta(G)$. Thus the complete inverse image of $\Delta(H)$ under the restriction map is all of $\Delta(G)$.
\end{proof}

\newpage

\end{document}